\newcommand{\bI}{\mathbb I}
\newcommand{\al}{\alpha}
\newcommand{\vf}{\varphi}
\newcommand{\om}{\omega}
\newcommand{\Om}{\Omega}
\newcommand{\cD}{\mathcal D}
\newcommand{\cF}{\mathcal F}
\newcommand{\cR}{\mathcal R}
\newcommand{\cT}{\mathcal T}
\newtheorem{theorem}{Theorem}[section]
\newtheorem{lemma}[theorem]{Lemma}
\newtheorem{prop}[theorem]{Proposition}
\theoremstyle{definition}
\newtheorem{remark}[theorem]{Remark}
\newtheorem{defin}[theorem]{Definition}
\numberwithin{equation}{section}
\newcounter{vremennyj}
\begin{document}

\title[]%
{
 A comparison of box and Carleson conditions on  bi-trees
}
\author[I.~ Holmes]{Irina Holmes}
\thanks{IH is partially supported by the NSF as an NSF Postdoc under Award No.1606270}
\address{Department of Mathematics, Michigan Sate University, East Lansing, MI. 48823}
\email{holmesir@msu.edu}
\author[G.~ Psaromiligkos]{Georgios Psaromiligkos}
\address{Department of Mathematics, Michigan Sate University, East Lansing, MI. 48823}
\email{psaromil@msu.edu}
\author[A.~Volberg]{Alexander Volberg}
\thanks{AV is partially supported by the NSF grant DMS-160065}
\address{Department of Mathematics, Michigan Sate University, East Lansing, MI. 48823}
\email{volberg@math.msu.edu \textrm{(A.\ Volberg)}}
\makeatletter
\@namedef{subjclassname@2010}{
  \textup{2010} Mathematics Subject Classification}
\makeatother
\subjclass[2010]{42B20, 42B35, 47A30}
%
%
\keywords{Carleson embedding on dyadic tree, bi-parameter Carleson embedding, Bellman function, capacity on dyadic tree and bi-tree}
\begin{abstract}
In this note  we give an examples of measures satisfying the box condition on certain sub-bi-trees (see below) but not satisfying Carleson condition on those sub-bi-trees. This can be considered as a certain counterexample for two weight bi-parameter embedding of Carleson type.
Our type of counterexample is impossible for a simple tree. In the case of a simple tree, the box condition, Carleson condition and two weight embedding are all equivalent. In the last section we show that bi-parameter box condition  implies the bi-parameter capacitary estimate for 
dyadic rectangles.

\end{abstract}
\maketitle

\section{The statement of the problem}
\label{st}

\subsection{Two-parameter embeddings}
Let $Q$ be the unit square $[0,1]^2$. We consider all its dyadic sub-squares  of size $2^{-N}\times 2^{-N}$, and denote an individual such square by the symbol $\om$.

On each $\om$ we are given two non-negative numbers: $\vf_\om$ and $\mu(\om)$. The collection $\{\mu(\om)\}_{\om}$ will be a measure on $Q$. This means that we think that
$2^{2N}\mu(\om)$ is the constant (on $\om$) density with respect to Lebesgue measure $m_2$.

We call a rectangle $R\subset Q$ \textit{$N$-coarse} if it is a) a dyadic rectangle, b) is a union of $\om$'s.  We call a set $U\subset Q$ $N$-coarse if it  is a union of $\om$'s. Our convention will be that all dyadic rectangles below -- with no exception -- are $N$-coarse. The number $N$ will be very large, and we will need all estimates to be {\it independent} of $N$.

We define the measure on $Q$, which is calculated on each $N$-coarse subset $U\subset Q$ as follows
$$
\mu(U) =\sum_{\om\subset U} \mu(\om)\,.
$$
Here $N$-coarse subsets are  subsets that are unions  of $\om$'s.
In particular, for any dyadic rectangle inside $Q$ we have
$$
\mu(R) =\sum_{\om\subset R} \mu(\om)\,.
$$
Shortly, the collection $\{\mu(\om)\}_{\om}$ will be called  a measure $\mu$ on $Q$.

\bigskip

Let $\cD(Q)= \cD_N(Q)$ be the family of all ($N$-coarse, we mention this for the last time) dyadic sub-rectangles of $Q$.
Similarly, we use $\cD(R)$ to  the family of all dyadic sub-rectangles of $R$, $R\in \cD(Q)$.

We think about the collection $\{\vf_\om\}_\om$ as a step function on $Q$, constant on every $\om$ and equal to $\vf_\om$ on every $\om$. So we use the notation $\vf= \sum_{\om} \vf_\om\cdot  {\bf 1}_\om$, and we can integrate $\vf$ with respect to $\mu$: let $R\in \cD$, then
$$
\int_R \vf\, d\mu =\sum_{\om\subset R} \vf_\om \mu(\om)\,.
$$

\bigskip

We are interested in several embedding theorems. 

\medskip

Embedding theorems of Carleson type serve as the simplest ``singular" integral operators in many situations. Here these will be two-parameter embeddings of Carleson type.


We are interested to understand the following $(\cD, \mu)$ embedding:
\begin{equation}
\label{ALe}
\sum_{R\in \cD} (\int_R \vf\, d\mu)^2\, \alpha_R \le C \int_Q \vf^2\, d\mu\,,
\end{equation}
where $\al:=\{\al_R\}_{R\in \cD}$ is a family of non-negative numbers. Family $\al$ can be considered as the second weight. It sits on the bi-tree $\cT_\cD$, and the embedding is considered as an operator
$$
L^2(Q, \mu)\to \ell^2(\cT_\cD, \al)\,.
$$
So we are dealing with a two weight two-parameter embedding. Let us remind the reader that  two weight one-parameter embedding is very well studied, important, and for it everything is clear.

\subsection{One-parameter two weight embedding}
\label{1param}

We start with $L^2(I_0, \mu)$. and want to imbed it into $\ell^2(T, \al)$, where $T$ is the usual dyadic tree, and $I_0$ is the interval $[0,1]$. The role of dyadic rectangles is played by dyadic sub-intervals of $[0,1]$, and we will call such intervals by letter $I$. So the embedding to a simple tree (one-parameter embedding) looks like
\begin{equation}
\label{1PEmb}
\sum_{I\in \cD_1} (\int_I \vf\, d\mu)^2\, \alpha_I \le C \int_{I_0} \vf^2\, d\mu\,,
\end{equation}
where $\al:=\{\al_I\}_{I\in \cD_1}$ is a family of non-negative numbers, and $\cD_1$  means the collection of dyadic subintervals of $I_0$.

It is well known when \eqref{1PEmb} happens, see, e. g. \cite{NTV99}, \cite{AHMV}.
\begin{theorem}
\label{1PEmbTh}
For embedding \eqref{1PEmb} to happen it is necessary and sufficient to have  the following box condition:
\begin{equation}
\label{1Pbox}
\forall I\in \cD_1, \quad \sum_{J\in \cD_1, J\subset I} \mu(J)^2\, \alpha_J \le C_0 \,\mu(I)\,,
\end{equation}
\end{theorem}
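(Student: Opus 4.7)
The plan is to handle necessity by a direct test-function argument, and sufficiency via a Calder\'on--Zygmund principal intervals (stopping-time) decomposition that reduces the embedding to the $L^2(\mu)$-boundedness of the dyadic $\mu$-maximal operator. For necessity I would plug $\vf = \mathbf{1}_I$ into \eqref{1PEmb} for an arbitrary $I \in \cD_1$: then $\int_J \vf\, d\mu = \mu(J)$ for every $J \subset I$ and $\int_{I_0} \vf^2\, d\mu = \mu(I)$, so restricting the sum on the left to descendants $J \subset I$ gives exactly \eqref{1Pbox} with $C_0 = C$.

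For sufficiency I may assume $\vf \ge 0$. Set $F_I := \mu(I)^{-1} \int_I \vf\, d\mu$ whenever $\mu(I) > 0$. I build a family $\mathcal{P} \subset \cD_1$ of \emph{principal intervals} in the usual way: $I_0 \in \mathcal{P}$; given $P \in \mathcal{P}$, take as its principal children the maximal $J \subsetneq P$ with $F_J > 2 F_P$. For each $I$ with $\mu(I) > 0$ let $\pi(I) \in \mathcal{P}$ denote its smallest principal ancestor, so that $F_I \le 2 F_{\pi(I)}$. Grouping terms by principal ancestor and applying the box condition \eqref{1Pbox} on each $P$ separately yields
\[ \sum_{I \in \cD_1} F_I^2\, \mu(I)^2\, \al_I \;\le\; 4 \sum_{P \in \mathcal{P}} F_P^2 \sum_{I \subset P} \mu(I)^2\, \al_I \;\le\; 4 C_0 \sum_{P \in \mathcal{P}} F_P^2\, \mu(P). \]
It remains to prove the Carleson embedding $\sum_{P \in \mathcal{P}} F_P^2\, \mu(P) \lesssim \int_{I_0} \vf^2\, d\mu$. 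This follows from sparseness of $\mathcal{P}$: a weak-$(1,1)$ estimate for the dyadic $\mu$-maximal operator at level $2 F_P$ shows that the principal children of $P$ have total $\mu$-measure at most $\tfrac12 \mu(P)$, so the sets $E_P := P \setminus \bigcup\{\text{principal children of }P\}$ are pairwise disjoint and satisfy $\mu(E_P) \ge \tfrac12 \mu(P)$; since $F_P \le M_\mu \vf$ pointwise on $P$, Doob's $L^2$-inequality for the dyadic $\mu$-maximal operator closes the estimate.

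There is no genuine obstacle here --- everything is standard one-parameter dyadic machinery, and the final constant satisfies $C \lesssim C_0$. The point worth emphasizing, which the rest of the paper exploits, is the \emph{failure} of the above argument on a bi-tree: the principal-ancestor grouping is still formally available, but the inner sum in the middle expression cannot in general be controlled by $\mu(P)$ purely from a bi-parameter box condition, since on the bi-tree the Carleson condition on principal rectangles is strictly stronger than the box condition --- which is precisely what the counterexample in this note is meant to demonstrate.
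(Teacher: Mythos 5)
Your proof is correct, but it takes a different route from the paper, which for this theorem only records the trivial necessity (testing on $\vf=\mathbf{1}_I$, exactly as you do) and for sufficiency refers to the Bellman-function proofs in \cite{NTV99}, \cite{AHMV}. Your sufficiency argument is the other standard one: the principal-intervals (stopping-time) decomposition reducing the embedding to Doob's $L^2(\mu)$ inequality for the dyadic $\mu$-maximal operator, i.e.\ precisely the ``two weight maximal function'' approach \`a la \cite{Saw} that the paper alludes to as morally equivalent. The steps all check out: grouping by the smallest principal ancestor uses $F_I\le 2F_{\pi(I)}$, the box condition \eqref{1Pbox} is applied once per stopping interval, the halving estimate $\mu\bigl(\bigcup\{\text{principal children of }P\}\bigr)\le\tfrac12\mu(P)$ follows from Chebyshev plus the disjointness of the maximal stopping children (this is where the one-parameter tree structure is used), the sets $E_P$ are pairwise disjoint, and the universal martingale maximal inequality closes the bound with $C\lesssim C_0$ (finiteness of the $N$-coarse tree makes the recursion unproblematic). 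The Bellman route buys sharp, structure-free constants and bypasses the maximal operator; your route is elementary, self-contained, and makes visible what fails in two parameters. On your closing aside, one refinement: for a single dyadic rectangle $P$ the bi-parameter box condition \eqref{2Pbox} \emph{does} control $\sum_{R\subset P}\mu(R)^2\al_R$; what actually breaks on the bi-tree is the stopping-family structure itself --- maximal rectangles with large average need not be disjoint, the natural stopping regions become unions of rectangles (so the inner sums would require the Carleson condition \eqref{2PC} rather than the box condition), and the strong maximal operator has no weak-type/Doob estimate for general $\mu$ --- which is consistent with, and illustrated by, the counterexample of Theorem \ref{ex}.
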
 

The necessity is, of course, trivial: just test \eqref{1PEmb} on $\vf={\bf 1}_I$. The sufficiency is not trivial. It is morally equivalent to a two weight maximal function result, see \cite{Saw}. It has many proofs. One of the simplest involves the Bellman function technique, see e. g. \cite{NTV99}, \cite{AHMV}.

\medskip

Our main goal in this note is to show that such a result fails in a bi-parameter case. However, we 
do not know (sometimes we do, but not in general) if one can replace box condition by more complicated Carleson condition in bi-parameter (= two parameter) embedding. We do know, however, that in the case when $\al_R\equiv 1$ the bi-parameter embedding is equivalent to Carleson condition \cite{AMPS}, \cite{AHMV}.

\subsection{Formulation of the question and formulation of the result}
\label{formu}

In a bi-parameter case we have the analog of box condition  formulated in terms of dyadic rectangles (=boxes)
\begin{equation}
\label{2Pbox}
\forall R_0\in \cD, \quad \sum_{R\in \cD, R\subset R_0} \mu(R)^2\, \alpha_R \le C_0\, \mu(R_0)\,,
\end{equation}

Of course this condition is necessary for bi-parameter embedding  \eqref{ALe}. But we will show below that, unlike the one parameter case, it is not sufficient for  bi-parameter embedding  \eqref{ALe}. 

But there is a more powerful necessary condition in the bi-parameter case. We call it Carleson condition.
Let symbol $\Om$ always denote a finite union of  some rectangles  from $\cD$. Here is the Carleson condition:
\begin{equation}
\label{2PC}
\forall \Om \subset Q, \quad \sum_{R\in \cD, R\subset \Om} \mu(R)^2\, \alpha_R \le C_0\, \mu(\Om)\,,
\end{equation}

Bi-parameter Carleson condition \eqref{2PC} is obviously necessary for the embedding \eqref{ALe}. It is enough to test \eqref{ALe} on $\vf={\bf 1}_\Om$. 

But is it sufficient? For the general case of $\al$ we do not know. But for $\al$ identically $1$, the answer is ``yes", and it is a difficult result from \cite{AMPS}, \cite{AHMV}.

\begin{remark}
We will show below that for certain $\al$  one can have bi-parameter box condition\eqref{2Pbox} to hold, but bi-parameter Carleson condition \eqref{2PC}  fails. One such example is easy to derive from famous Carleson's counterexample \cite{Carleson}. We will note that. Then we will build yet another counterexample of the type: bi-parameter box condition holds, but embedding does not hold. Unfortunately in these both counterexamples -- the one derived from Carleson's one, and another one that we build -- the sequence $\{\al_R\}_{R\in \cD}$ is very wild. The example that one obtains from \cite{Carleson}  has $\{\al_R\}_{R\in \cD}$  even ``unbounded". In both examples $\al_R$ must be $0$ quite often.
\end{remark}

\begin{remark}
We, so far, cannot construct  an example with $\al_R\equiv 1$ and such that the bi-parameter box condition holds, but the embedding does not hold.
\end{remark}

\medskip

\noindent{\bf Acknowledgement.} We are grateful to Fedor Nazarov for fruitful conversation which allowed us to improve and/or complement certain results below.

\subsection{Particular cases and more questions}
Let family $\al$ consists only of $1$ and $0$. And let $\cF\subset \cD$ denote the support of $\al$. Then the inequality of  our main interest, namely, \eqref{ALe}, becomes
\begin{equation}
\label{Fe}
\sum_{R\in \cF} (\int_R \vf\, d\mu)^2\, \le C \int_Q \vf^2\, d\mu\,,
\end{equation}

We wish to consider sub-problems, where $\cF$ (or $\cD\setminus \cF$) has special structure.
The most interesting case is 
\begin{equation}
\label{DF}
\cF=\cD\,.
\end{equation}

\begin{figure}
\centering
\includegraphics[scale=.35]{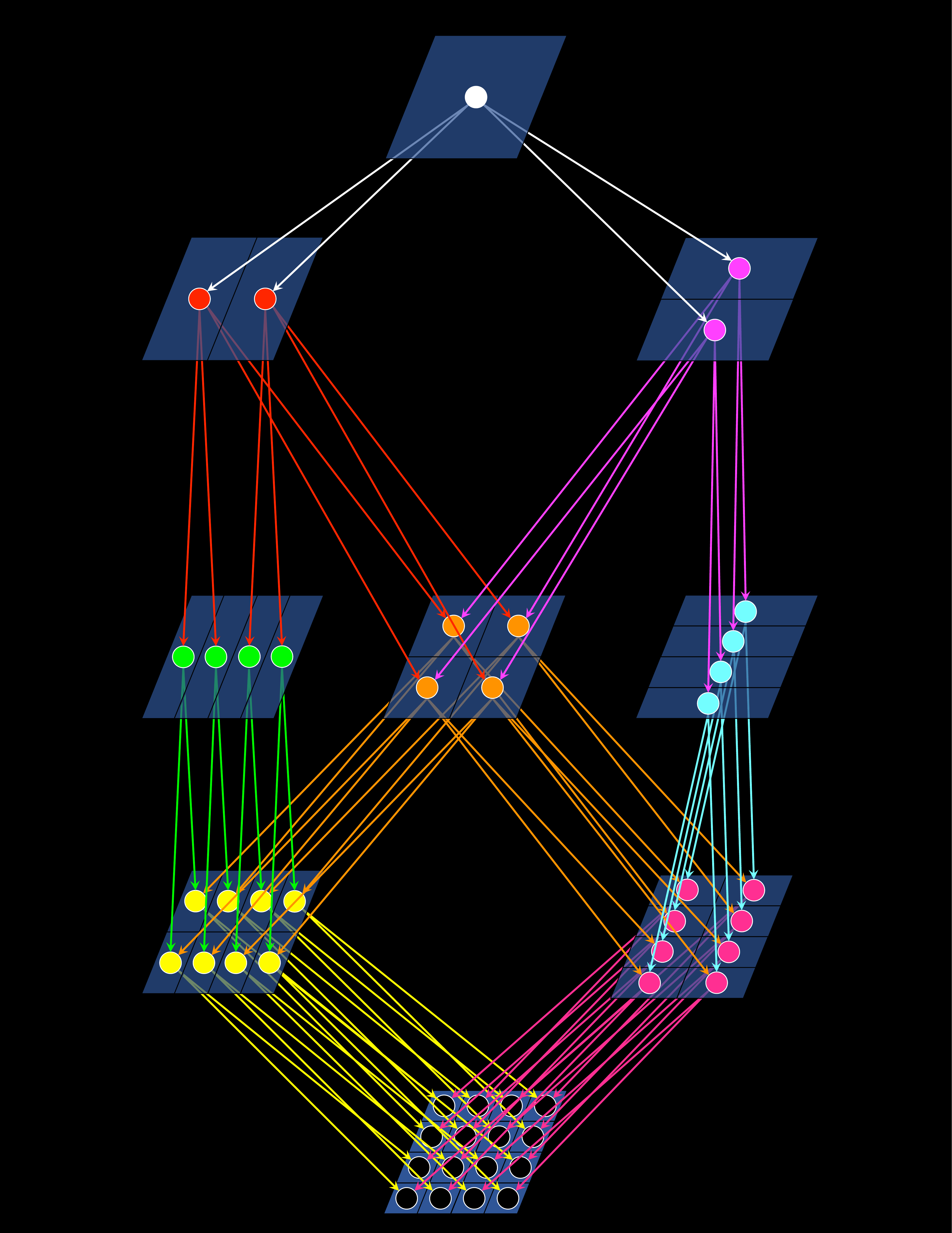}
\caption{Bi-tree for $N=2$.}
\label{fig:bitree}
\end{figure}

As we already mentioned above, in this case the necessary and sufficient condition for 
\begin{equation}
\label{De}
\sum_{R\in \cD} (\int_R \vf\, d\mu)^2 \le C \int_Q \vf^2 \,d\mu\,.
\end{equation}
was found in \cite{AMPS} and then in \cite{AHMV}.
This condition (it is proved in \cite{AMPS}, \cite{AHMV} that it is equivalent to \eqref{De}) is the following:
let $\cD'$ is any sub-family of $\cD$, and $U=\cup_{R_0\in \cD'} R_0$, then let us  have
\begin{equation}
\label{CDe}
\forall \,U,\quad  \sum_{R\subset U, R\in \cD} ( \mu(R))^2 \le C' \mu(U)\,.
\end{equation}
It is trivial to see that \eqref{De} implies \eqref{CDe}: just consider $\vf= {\bf 1}_U$ as a test function in \eqref{De}. The opposite implication is of quite high level of non-triviality (it looks like a $T1$ theorem, of course): see \cite{AMPS}, \cite{AHMV}.

\begin{defin}
Condition \eqref{CDe} is called {\it  bi-tree Carleson condition} or simply {\it  Carleson condition} .
\end{defin}

\bigskip

If $\cF\neq \cD$ the problem is still interesting, especially if $\cF$ is rich enough, but $\cD\setminus \cF$ is not trivial (if it is just one rectangle, it is the same problem as before).

\medskip

So let us consider a particular case when $\cF$ is natural and interesting, and another case when $\cD\setminus \cF$  has an interesting structure.

\medskip

We would like to use the language of {\bf bi-tree}. Let every $R\in \cD$ be viewed as a vertex, and the edge goes from $R_1$ to $R_2$ if and only if $R_2\subset R_1, R_1, R_2\in \cD$, and $R_1$ is a parent of $R_2$. Any such $R_1$ as above  is called a parent of $R_2$ if $R_2$ is obtained by splitting $R_1$ vertically or horizontally in two equal halves, any such $R_2$ as above is called a child of $R_1$. A child may have two parents.

  Then $\cD$ becomes a finite bi-tree (with $2^{2N}$ terminal vertices at each $\om$ and one ``root" corresponding to $Q$). We think about bi-trees growing down, the root is the highest vertex.
Let us call it $\cT_\cD$.

We say that a child is {\it below} a parent (so we have arrows, direction on the graph).  Bi-tree has many  non-directed cycles but does not have directed cycles.

Naturally we also have   $\cT_\cF$ for every $\cF\subset \cD$, and
$$
\cT_\cF\subset \cT_\cD\,.
$$

\begin{figure}
\centering
\includegraphics[scale=.2]{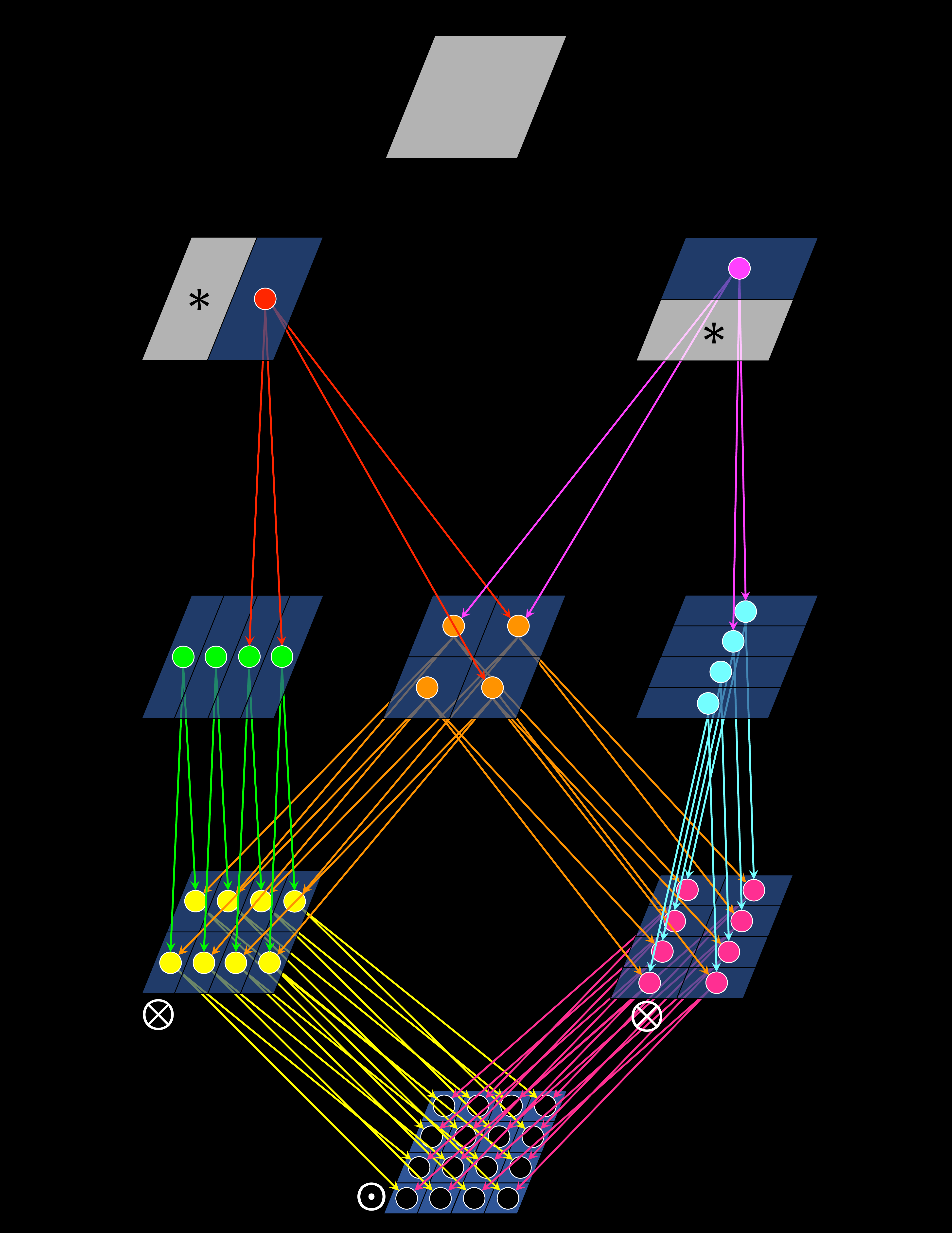}
\caption{Bi-tree ($N=2$) with ``hooked'' rectangles ''above the hyperbola'' deleted.
This is both pruned and cut. As a cut bi-tree, this is obtained by deleting white rectangles.
In terms of inequality \eqref{ALe} white rectangles $W$  correspond to $\al_W=0$. Coefficients $\al_R=1$ for all other rectangles.
}
\label{fig:bitree-nh}
\end{figure}

\begin{defin}
We call  $\cT_\cF$  a \textbf{pruned bi-tree} if  it is a connected subgraph of the full bi-tree $\cT_\cD$ such that all $\om \in \partial \cT_\cD$ (the smallest elements of  $\cT_\cD$) belong to $\cT_\cF$. By connected here we mean connected as an undirected graph. In literature such directed graphs are called weakly  connected.
\end{defin}

\begin{defin}
We call  $\cT_\cF$  a \textbf{cut bi-tree} if for every vertex $\al \in \cT_\cD\setminus  \cT_\cF$, any of its its parents also belong to $\cT_\cD\setminus \cT_\cF$. In other words, one obtains a cut bi-tree by deleting vertices $\alpha_1, \ldots, \alpha_k$ and then also deleting all parents of each $\alpha_i$. Notice that cut bi-trees have the root (the vertex corresponding to the unit square $Q$) cut out.
\end{defin}

\medskip

\noindent{\bf Example.} The reader will see below the definition of {\it hooked} rectangles. Let $\cF$ be all dyadic rectangles except the hooked ones. The corresponding bi-tree $\cT_\cF$ is a cut bi-tree. Also if one deletes not all hooked rectangles but only hooked rectangles with upper-right vertex ``above the hyperbola" (see below) the resulting family again generates a cut bi-tree. Notice that it is also a pruned tree. See Figure \ref{fig:bitree} and \ref{fig:bitree-nh}.

\bigskip

Here is an obvious proposition.
\begin{prop}
Let $\cF\subset \cD$. Then  embedding \eqref{Fe}, that is:
\begin{equation*}
\forall \vf,\quad  \sum_{R\in \cF} (\int_R \vf d\mu)^2 \le C \int_Q \vf^2 d\mu
\end{equation*}
implies that for any collection $\cF'\subset \cD$ and $U'=\cup_{R_0\in \cF'} R_0$
\begin{equation}
\label{CFe}
\sum_{R\subset U', R\in \cF  } (\mu(R))^2 \le C' \mu(U')\,.
\end{equation}
\end{prop}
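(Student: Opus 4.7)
The plan is to test the embedding inequality \eqref{Fe} against the indicator function of the set $U'$. This is the same trick that was used earlier in the excerpt to derive the one-parameter box condition from \eqref{1PEmb} and to derive the bi-tree Carleson condition from \eqref{De}; here we just need to check that the trick still goes through when $\cF$ is an arbitrary sub-family of $\cD$.

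First I would set $\vf = \mathbf{1}_{U'}$, which is a legitimate step function of the form $\sum_\om \vf_\om \mathbf{1}_\om$ because $U'$ is $N$-coarse (being a union of dyadic rectangles, each of which is a union of $\om$'s). Then I would compute the two sides of \eqref{Fe}. On the right we get $C \int_Q \mathbf{1}_{U'}^2\, d\mu = C\, \mu(U')$. On the left, for any $R\in\cF$, $\int_R \vf\, d\mu = \mu(R\cap U')$, so
\[
\sum_{R\in\cF} \mu(R\cap U')^2 \le C\, \mu(U').
\]

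Next I would discard all terms on the left except those for which $R\subset U'$. For such $R$ we have $\mu(R\cap U') = \mu(R)$, and all other terms are non-negative, so the sum only decreases. This yields
\[
\sum_{\substack{R\in\cF \\ R\subset U'}} \mu(R)^2 \le C\, \mu(U'),
\]
which is exactly \eqref{CFe} with $C' = C$. There is no genuine obstacle here: the only things to verify are that $\mathbf{1}_{U'}$ is an admissible test function (built from $\om$'s) and that restricting the sum on the left to $R\subset U'$ is legitimate because the remaining terms are non-negative.
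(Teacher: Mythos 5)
Your proof is correct and is exactly the paper's argument: test \eqref{Fe} on $\vf=\mathbf{1}_{U'}$, note the right side becomes $C\mu(U')$, and drop the nonnegative terms with $R\not\subset U'$ on the left. The extra detail you supply (that $\mathbf{1}_{U'}$ is an admissible $N$-coarse step function and that $\int_R \vf\,d\mu=\mu(R)$ when $R\subset U'$) is a welcome elaboration of the same one-line proof.
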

\begin{proof}
Just consider $\vf ={\bf 1}_{U'}$ and use \eqref{Fe} with this $\vf$.
\end{proof}

\begin{remark}
It has been proved in \cite{AMPS}, \cite{AHMV} that both claims in this proposition are, in fact, equivalent if $\cF=\cD$ (and of course we mean here that no constant is $N$-dependent).
However, for other families $\cF$ the sufficiency of \eqref{CFe} for the embedding \eqref{Fe} is not clear. We believe that the equivalence is true for prunned bi-trees. In fact, may be \eqref{CFe} is always sufficient for the embedding \eqref{Fe}. But notice that {\it always} ``no Carleson condition \eqref{CFe}" means ``no embedding".
\end{remark}

\bigskip

\begin{defin}
Condition \eqref{CFe} is called $\cF$-{\it Carleson condition} or simply {\it  Carleson condition}  if it is clear what $\cF$ is meant.
\end{defin}

Carleson conditions of \eqref{CFe} have sub-conditions, called {\bf box conditions}. Here it is:
\begin{equation}
\label{BFe}
\forall R_0\in \cD, \quad \sum_{R\in \cD(R_0),  R\in \cF  } (\mu(R))^2 \le C' \mu(R_0)\,.
\end{equation}

This is precisely bi-parameter box condition \eqref{2Pbox} if $\cF$

\bigskip

\begin{remark}
For the dyadic intervals and corresponding simple  trees the box condition is equivalent to Carleson condition and, thus, equivalent to Carleson embedding, see \cite{AHMV}.
The feeling is that Carleson condition is always strictly  stronger than box condition if one works with dyadic rectangles rather than dyadic intervals (for non-trivial families $\cF$ of dyadic rectangles). This intuitively correspond to Carleson example related with Chang--Fefferman's theorem. See \cite{Carleson}, \cite{ChF}.  Here we prove  the following theorem.
\end{remark}

\medskip

\begin{theorem}
\label{ex}
There is a family $\cF$ that generates a cut bi-tree $\cT_\cF$ (which is also a pruned bi-tree) such that the box condition \eqref{BFe} is satisfied, but the Carleson condition
\begin{equation}
\label{CFe0}
\sum_{R\subset \Omega, R\in \cF  } \mu(R)^2 \le C' \mu(\Omega)\,.
\end{equation}
fails for a particular $\Omega=\cup_{R_0\in \cF'} R_0$ for some $\cF'\subset \cF$. 
\end{theorem}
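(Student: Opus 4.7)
The strategy is to adapt Carleson's classical counterexample for the strong maximal function (\cite{Carleson}, \cite{ChF}) to the cut/pruned bi-tree setting. Fix $N$ large, and distinguish two classes of dyadic rectangles: those \emph{above the hyperbola} $xy = 2^{-N}$ (for a suitable precise dyadic version of this condition, e.g.\ $R = I\times J$ of sides $2^{-i}\times 2^{-j}$ with $i+j<N$) and those that are \emph{hooked}, meaning they have an extreme aspect ratio attached to a side of $Q$ (e.g.\ $\min(i,j)=0$, so $R$ is a full row or column). Let $\cF$ be the family obtained from $\cD$ by deleting those rectangles that are both hooked and above the hyperbola. The deleted set is ancestor-closed, because every parent of a hooked above-hyperbola rectangle is still hooked (it preserves the full side) and its value of $i+j$ can only decrease, so $\cT_\cF$ is a cut bi-tree. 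Every terminal square $\omega$ has $i+j=2N>N$ and no full side, so lies in $\cF$ and $\cT_\cF$ is pruned. The measure $\mu$ is chosen as a suitable modulation of Lebesgue measure, tuned in Step~3 below to inflate the weight of the rectangles along Carleson's chain.

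The box condition $\sum_{R\in\cF,\,R\subset R_0}\mu(R)^2\leq C\mu(R_0)$ is verified by fixing $R_0\in\cD$ and splitting the sum by the shape of $R$. Non-hooked subrectangles obey the standard bound $O(\mu(R_0))$ by the one-parameter geometric-sum argument applied separately on each axis (essentially iterating Theorem \ref{1PEmbTh}). Hooked subrectangles that remain in $\cF$ necessarily lie on or below the hyperbola, so they satisfy $\mu(R)\lesssim 2^{-N}\cdot\mu(R_0)/|R_0|$; summing their squared masses gives a rapidly convergent geometric series, again absorbed in $C\mu(R_0)$. Crucially, the hooked rectangles \emph{above} the hyperbola---which would otherwise produce an unbounded contribution to the box sum---have been removed from $\cF$ precisely to prevent this blow-up.

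To break the Carleson condition, take Carleson's overlapping chain $R_k:=[0,2^{-k}]\times[0,2^{-(N-k)}]$ for $k=0,1,\dots,N$, and set $\Omega:=\bigcup_{k=0}^N R_k$. The classical overlap estimate gives $\mu(\Omega)\lesssim N\cdot 2^{-N}$, since the $R_k$ pile up heavily near the origin. Each $R_k$ belongs to $\cF$ (either non-hooked or lying on the hyperbola), and $\Omega$ also contains many non-hooked subrectangles of each $R_k$. By choosing $\mu$ to amplify the mass of the $R_k$'s (for example, via a density concentrated near the curve $xy=2^{-N}$), one makes $\sum_{R\in\cF,\,R\subset\Omega}\mu(R)^2\gtrsim\mu(\Omega)\cdot f(N)$ with $f(N)\to\infty$, while preserving the box bound because any single $R_0$ intersects only a bounded-depth portion of Carleson's chain and the modulation does not concentrate enough weight inside any one $R_0$. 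The principal obstacle is exactly this quantitative calibration: the hooked/hyperbola threshold and the weight $\mu$ must be tuned simultaneously so that the box estimate survives uniformly in $N$ while the Carleson ratio over $\Omega$ diverges. Once this combinatorial-geometric balance is in place, the remainder of the argument is routine bookkeeping together with Carleson's overlap estimate for $|\Omega|$.
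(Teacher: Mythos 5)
Your plan does not reach a proof, and the two places where it stalls are exactly where the content of Theorem \ref{ex} lies. First, the geometry you pick is the configuration that the paper shows cannot produce the effect. In the exponent coordinates $(m,k)$ of Section 4 (a hooked rectangle $[0,2^{-N+m})\times[0,2^{-N+k})$ corresponds to the lattice point $(m,k)$), your ``hyperbola'' $i+j<N$ is a straight line --- it is an area threshold --- and your chain $R_k=[0,2^{-k}]\times[0,2^{-(N-k)}]$ is the constant--area, \emph{balanced} chain. The paper computes precisely this case as its balanced example and gets $F_A\asymp B_A\asymp N^2$: the box count and the family count are comparable, so no box/Carleson separation can come from it. Moreover, with (any mild modulation of) Lebesgue measure your chain has $\sum_k m_2(R_k)^2=(N+1)4^{-N}$, far below $m_2(\Omega)\asymp N2^{-N}$; this is the same obstruction the paper records in Section 2 when explaining why Carleson's family cannot be transplanted directly, so \emph{all} of the desired divergence must come from the unspecified ``amplification'' of $\mu$, which is exactly the step you defer. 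The paper's mechanism is different: ``hooked'' means anchored at the origin (Definition \ref{D:hooked}), not your full rows/columns; the deleted family $\mathcal H$ (Definition \ref{mH}) consists of hooked rectangles with $mk>N$, i.e.\ above a genuine hyperbola in the exponent lattice; the measure is a single uniform mass $a=1/N$ on the corner square $A$; and $\Omega$ is the union of the hooked rectangles with $mk\le N$. Then $B_A\asymp N$ yields the box estimate for the tested boxes with this $a$, while Lemma \ref{almarea} gives $F_A\asymp N\log N$, so \eqref{CFe0} fails by a factor $\log N$. Your deletion of large full rows/columns does not remove the obstruction stated at the very end of the paper --- the full box condition forces $\mu(\om)\lesssim C_\delta N^{-2+\delta}$ for the corner square --- because you keep all corner (hooked) rectangles; hence any measure concentrated near the corner strongly enough to break the Carleson condition would also break your box condition \eqref{BFe}.

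Second, even granting a workable geometry, your write-up never constructs $\mu$ (``tuned in Step 3''; ``the principal obstacle is exactly this quantitative calibration''), so the theorem's content is asserted rather than proved; and your box verification leans on ``iterating Theorem \ref{1PEmbTh} on each axis,'' which is not a valid bi-parameter principle (the failure of such one-parameter reasoning is the theme of the paper) and in any case cannot be checked for an unspecified measure. A proof must exhibit $\mu$, $\cF$, $\Omega$ explicitly and verify \eqref{BFe} together with the failure of \eqref{CFe0} with constants independent of $N$; in the paper this is a short counting argument (mass $1/N$ at the corner, $B_A\asymp N$ versus $F_A\asymp N\log N$), whose only delicate point is counting the under-the-hyperbola hooked rectangles inside a given box --- none of which appears in your proposal.
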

In particular, there is no embedding \eqref{Fe} for this family $\cF$ even though the box condition \eqref{BFe} holds.

\bigskip

\subsection{Box conditions versus Carleson condition}
We wish to construct a measure $\mu>0$ on the unit square $Q$ such that the box condition is satisfied
\begin{equation}
\label{box0}
\sum_{R\in \cD(R_0)} \mu(R)^2 \, \al_R\le C_0\, \mu(R_0), \quad \forall R_0\in \cD,
\end{equation}
where $\cD$ is all dyadic sub-rectangles of the unit  square, $\cD(R_0)$ is all dyadic sub-rectangles of $R_0$.
But Carleson condition is not satisfied, meaning that there is a collection $\cR$ of rectangles
of $\cD$, such that if 
\begin{equation}
\label{UcR}
U_\cR= \cup_{R\in \cR} R,
\end{equation}
then
\begin{equation}
\label{Carl}
\sum_{R\subset U_\cR} \mu(R)^2\, \al_R \ge C_1 \,\mu(U_\cR),
\end{equation}
where $C_1$ is as large as we wish with respect to $C_0$.

\medskip


Let $\cF$ be $\cD\setminus {\mathcal H}$, where  ${\mathcal H}$ is the set of all hooked dyadic rectangles inside $Q=[0,1]^2$ such that their ``right upper corner lies above hyperbola" (see the definitions below).  Let
$$
\al_R =\begin{cases} 1,\,\, \text{if} \,\, R\in \cF,
\\
0,\,\, \text{if} \,\, R \in {\mathcal H}\,.
\end{cases}
$$
Then we construct measure $\mu>0$ on $Q$ such that for
\begin{equation}
\label{boxH}
\sum_{R\in \cD(R_0)} \mu(R)^2\, \al_R = \sum_{R\in \cD(R_0), R\notin {\mathcal H}} \mu(R)^2 \le C_0 \,\mu(R_0), \quad \forall R_0\in \cD,
\end{equation}
but Carleson condition is not satisfied, meaning that there is a collection $\cR$ of dyadic rectangles
(they all will be from $\cF=\cD\setminus{\mathcal H}$), such that if 
\begin{equation}
\label{UcRH}
U_\cR= \cup_{R\in \cR} R,
\end{equation}
then
\begin{equation}
\label{CarlH}
\sum_{R\subset U_\cR} \mu(R)^2\, \al_R = \sum_{R\subset U_\cR, R\notin {\mathcal H}} \mu(R)^2 \ge C_1\, \mu(U_\cR),
\end{equation}
where $C_1$ is as large as we wish with respect to $C_0$.

This construction (see below) of course proves Theorem \ref{ex}.

\bigskip

\section{Comparison with Carleson's counterexample related to Chang--Fefferman's product $BMO$}
\label{Carl-sect}

In \cite{Carleson} Carleson constructed the families $\cR$ (there is a sequence of families, but we skip the index) of dyadic sub-rectangles of $Q=[0,1]^2$ having the following two ``contradicting" properties:
\begin{equation}
\label{boxCa}
\forall R_0 \in \cD, \quad \sum_{R\subset R_0, R\in \cR} m_2(R) \le C_0 m_2(R_0)\,,
\end{equation}
but for $U_\cR:= \cup_{R\in \cR} R$
\begin{equation}
\label{Ca}
 \sum_{ R\in \cR} m_2(R) \ge C_1 m_2(U_\cR)\,,
\end{equation}
where $C_1/C_0$ is as big as one wishes.

Suppose one would be able to put square over $m_2(R)$ in the LHS of \eqref{Ca}. Then the counterexample of the type \eqref{box0} + \eqref{Carl} would immediately follow with $\al_R\equiv 1$. Explanation: it is just a trivial fact that (notice the summation over {\it all} dyadic rectangles inside $R_0$ below)
$$
\forall R_0 \in \cD, \quad \sum_{R\subset R_0, R\in \cD} [m_2(R)]^2 \le 4 m_2(R_0)\,.
$$
This and \eqref{boxCa} would give us box condition with $\al_R\equiv 1$. On the other hand, \eqref{Ca} (with square in the LHS!) would give non-existence of the embedding.

\medskip

However, all this is wishful thinking.  It is impossible to have the analog of \eqref{Ca}  but with
summation of $[m_2(R)]^2$ in the left hand side of \eqref{Ca}. Indeed, in \cite{Carleson} the family $\cR$ has the property that $\sum_{R\in \cR} m_2(R) =1$.
Therefore,
$$
 \sum_{ R\in \cR} [m_2(R)]^2 \le \max_{R\in \cR} m_2(R)  \le 1\cdot m_2(U_\cR)\,,
 $$
 and this is exactly opposite of \eqref{Ca}.

\medskip

\subsection{Carleson's construction gives counterexample with wild $\{\al_R\}_{R\in \cD}$}
\label{wild}

We failed to derive the counterexample with $\al_R\equiv 1$ from Carleson's construction \cite{Carleson}. But with {\it some} (rather wild) $\{\al_R\}_{R\in \cD}$ such counterexample is readily provided by the collection $\cR$ of dyadic rectangles constructed by Carleson.

Let us put
$$
\al_R= \begin{cases} \frac{1}{m_2(R)}, \quad R\in \cR,
\\
0, \quad \text{otherwise}
\end{cases}
$$
Measure $\mu$ is just $m_2$. Then fix any dyadic rectangle $R_0$, then box condition \eqref{box0}:
$$
\sum_{R\subset R_0}\mu(R)^2 \al_R = \sum_{R\subset R_0, R\in \cR} m_2(R) \le m_2(R_0)=\mu(R_0)\,.
$$
But if $\Omega:= \cup_{R\in \cR} R$, then
$$
\sum_{R\subset \Omega}\mu(R)^2 \al_R =\sum_{R\subset \Omega, R\in \cR} m_2(R) =1\ge C\,m_2(\Omega)\,,
$$
where $C$ can be chosen as large as one wants. Hence, \eqref{Ca}  holds too with large constant.

\section{The dual formulation of embedding}
\label{dualForm}
We say that vertices $\gamma_2>\gamma_1$, $\gamma_1, \gamma_2\in \cT_\cF$ if the dyadic rectangle $R(\gamma_2)$ (to which $\gamma_2$ corresponds) contains the dyadic rectangle $R(\gamma_1)$ (to which $\gamma_1$ corresponds). With this partial order on vertices of bi-tree $\cT_\cF$ we can define $A_\gamma$--the set of ancestors of $\gamma$ and $S_\gamma$--the set of successors of $\gamma$.
Let $\psi$ be a non-negative function on vertices of bi-tree $\cT_\cF$. Define for every $\gamma \in \cT_\cD$
$$
\bI_\cF \psi (\gamma)= \sum_{\gamma'\ge_\cD \gamma, \gamma' \in \cT_\cF} \psi(\gamma'),
$$
It is Hardy operator  on the full bi-tree $\cT_\cD$ but defined using $\cT_\cF$.

We have to explain what is $\ge_\cD$. If we write $\gamma'\ge_\cD \gamma$, $\gamma \in \cT_\cD, \gamma' \in \cT_\cF$, this means that $\gamma'\ge \gamma$ in $\cT_\cD$, but it is possible that there is no path down from  $\gamma'$ to $\gamma$ in a smaller bi-tree $\cT_\cF$.

\medskip

Let us make another look at the  bi-trees Carleson embedding:
\begin{equation}
\label{CEF}
\forall \vf,\quad  \sum_{R\in \cF} (\int_R \vf d\mu)^2 \le C_0 \int_Q \vf^2 d\mu
\end{equation}

The dual formulation is in the following proposition about weighted Hardy inequality on bi-trees.

\begin{prop}
\label{dual}
Embedding \eqref{CEF} is equivalent to the following statement:
\begin{equation}
\label{CEFdual}
\forall \psi \in \ell^2(\cT_\cF, \al),\quad  \sum_{\om} \bI_\cF \psi (\om)^2 \mu(\om) \le  C_0 \|\psi\|_{\ell^2(\cT_\cF, \al)}^2:= C_0 \sum_{\gamma\in \cT_\cF} \psi(\gamma)^2 \al_{R(\gamma)}\,.
\end{equation}
\end{prop}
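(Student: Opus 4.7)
The plan is to interpret \eqref{CEF} as the boundedness of a concrete linear operator $T$ between Hilbert spaces and to observe that \eqref{CEFdual} is, up to the elementary identity $\|T\|=\|T^*\|$, exactly the boundedness of its adjoint. Concretely, I would define $T\colon L^2(Q,\mu)\to \ell^2(\cT_\cF)$ by
$$
(T\vf)(\gamma):=\int_{R(\gamma)}\vf\,d\mu=\sum_{\om\subset R(\gamma)}\vf_\om\,\mu(\om),\qquad \gamma\in\cT_\cF,
$$
so that \eqref{CEF} reads exactly $\|T\vf\|_{\ell^2(\cT_\cF)}^2\le C_0\|\vf\|_{L^2(Q,\mu)}^2$, i.e.\ $\|T\|^2\le C_0$. (Since \eqref{CEF} is the $\al\equiv 1$ case of the general bi-parameter embedding, the weight $\al_{R(\gamma)}$ on the right-hand side of \eqref{CEFdual} is to be read as $1$ on $\cT_\cF$.)

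To compute $T^*$, I pair with a test function $\psi$ on $\cT_\cF$ and swap the order of summation, using that $\int_{R(\gamma)}\vf\,d\mu=\sum_{\om\subset R(\gamma)}\vf_\om\mu(\om)$:
$$
\langle T\vf,\psi\rangle_{\ell^2(\cT_\cF)}=\sum_{\gamma\in\cT_\cF}\psi(\gamma)\sum_{\om\subset R(\gamma)}\vf_\om\,\mu(\om)=\sum_{\om}\vf_\om\,\mu(\om)\sum_{\substack{\gamma\in\cT_\cF\\ \gamma\ge_\cD\om}}\psi(\gamma).
$$
Reading the inner sum against the inner product of $L^2(Q,\mu)$ identifies $(T^*\psi)(\om)=\sum_{\gamma\in\cT_\cF,\,\gamma\ge_\cD\om}\psi(\gamma)=\bI_\cF\psi(\om)$, i.e.\ $T^*=\bI_\cF$. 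Consequently $\|T^*\psi\|_{L^2(Q,\mu)}^2=\sum_{\om}(\bI_\cF\psi(\om))^2\mu(\om)$, so the inequality $\|T^*\|^2\le C_0$ is literally \eqref{CEFdual}. Since $\|T\|=\|T^*\|$ in Hilbert spaces, \eqref{CEF} and \eqref{CEFdual} are equivalent with identical constants.

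No serious obstacle arises; the only bookkeeping subtlety is the interpretation of the partial order ``$\ge_\cD$'' in the inner sum. After the swap one must sum over all $\gamma\in\cT_\cF$ with $R(\gamma)\supset R(\om)$, irrespective of whether a downward directed path from $\gamma$ to $\om$ survives inside the sub-bi-tree $\cT_\cF$. This is precisely the convention built into the definition of $\bI_\cF$ given just above the proposition, so the match with the operator appearing in \eqref{CEFdual} is verbatim and no further argument is needed.
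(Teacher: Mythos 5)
Your argument is correct: identifying \eqref{CEF} as the boundedness of the operator $T\vf(\gamma)=\int_{R(\gamma)}\vf\,d\mu$ from $L^2(Q,\mu)$ to $\ell^2(\cT_\cF)$, computing $T^*=\bI_\cF$ by Fubini, and invoking $\|T\|=\|T^*\|$ is exactly the standard duality the paper has in mind (it states Proposition \ref{dual} without writing out a proof, precisely because this is the intended routine argument), and your remark that $\al\equiv 1$ on $\cT_\cF$ and that the inner sum after the swap matches the $\ge_\cD$ convention in the definition of $\bI_\cF$ disposes of the only points needing care. The only cosmetic addition one might make is to note that since the kernel is nonnegative and everything is finite-dimensional, restricting to nonnegative $\vf$ and $\psi$ (as the paper implicitly does) does not change the operator norms.
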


\bigskip

\subsection{The comparison with box condition from \cite{AMPS}}
The reader should be warned that there are many absolutely different box conditions. For example, in \cite{AMPS} the box condition looks as follows:
$$
\forall R_0 \in \cD, \quad \mu(R_0) \le C_0 \,cap(R_0)\,.
$$
Here capacity is the one introduced in \cite{AMPS} on a full bi-tree.  For the box $R_0$ of size $2^{-m}$ by $2^{-k}$ capacity is equivalent to $1/(mk)$.

It is remarked in \cite{AMPS} that this box condition can be readily reconciled with
$$
\mu(U) \ge C_1 \,cap(U),
$$
where $U$ is a  union of  dyadic rectangles and $C_1/C_0$ is as large as one wants. This claim is  an  easy one a) because capacity is not additive, only sub-additive, b) because this claim holds even for dyadic intervals, not dyadic rectangles! 

In other words, this effect that  the capacitary box condition  in \cite{AMPS} does not imply the capacitary condition in \cite{AMPS} is true even for the simple tree; one does not need complicated combinatorics and bi-trees for this effect to take place.

\medskip

However, in the case of the simple tree our box condition \eqref{box} applied to dyadic intervals (instead of dyadic rectangles) always implies the Carleson condition \eqref{Carl}. This is obvious (see also  \cite{AHMVt}). So the effect that box condition does not imply Carleson condition in our sense  is a much more subtle effect, and one should  be able to observe it on bi-trees or more complicated graphs but not on simple trees.

What we are doing below is constructing a certain bi-tree $\cT_\cF$ on which we indeed observe this effect of having the box condition but not the Carleson one.

\bigskip

\section{Another counterexample: $\al_R=1\,\text{or}\, 0$. The construction of $\Omega$}
\label{om}

We construct in this section a special domain $\Omega$ in the unit square $Q$. For that we construct a special family of dyadic sub-rectangles of $Q$, each having the origin $(0,0)$ as its lower left corner, and then $\Omega$ will be their union. All our dyadic rectangles below shall be $N$-coarse.

\subsection{$A$-unbalanced systems}

Let us fix a dyadic square $A$ of size $2^{-N}\times 2^{-N}$ inside the unit square.
Let also $\cR=\{ R_0,\dots R_M\}$ be a family of dyadic rectangles in $Q$. We denote their union by $U_{\cR} := \cup_{R\in\cR}R$.
 We then associate two special numbers with the collection $\cR$:
	\begin{itemize}
	\item $F_A=F_A(\cR)$ ($F$ stands  for family) is the number of dyadic rectangles containing $A$ and contained in $U_\cR$;
	\item $B_A= B_A(\cR)$ ($B$ stands  for box)  is the maximal number of dyadic rectangles containing $A$ and contained in one of dyadic rectangles $R$ such that $R\subset U_\cR$.
	\end{itemize}
Note that if each $R_i$ is {\it maximal} by inclusion among dyadic rectangles contained in $U_\cR$, then to compute $B_A$ we need to compute maximum in $i$, $0\le i\le M$, of the number of dyadic rectangles containing $A$ and contained in $R_i$.

We call the family $\cR$ an \textbf{$A$-unbalanced system} if 
	\begin{equation}
	\label{unb}
	F_A \ge c\log N\,,
	\end{equation}
where $c$ is a small positive absolute constant. 
We are interested in unbalanced families with very large $N$. Clearly $F_A \le (M+1) B_A$, so the number of rectangles in the family should grow with growing $N$.

\bigskip

\subsubsection{A balanced family.}
Let us first give the example of a large and ``complicated" family, which is balanced (so it is not what we want).
Let $A$ be the left lower corner $2^{-N}\times 2^{-N}$ dyadic square. Let $\cR$ consist of
$$
R_0= [0,2^{-N})\times [0,2^0), \ldots, R_k = [0,2^{-N+k})\times [0,2^{-k}), \ldots, 
	R_N= [0,2^0)\times [0,2^{-N})\,.
$$
It is easy to compute (see Figure \ref{fig:Bal1}) that for this family $F_A\asymp N^2, B_A\asymp N^2$. In fact, one can compute these numbers exactly by a simple counting argument:
	$$F_A(\cR) = \frac{(N+1)(N+2)}{2}; \:\:\: B_A(\cR) = \left\{\begin{array}{ll}
		(N+2)^2/4, & \text{ if } N \text{ is even};\\
		(N+1)(N+3)/4, & \text{ if } N \text{ is odd.}
	\end{array}\right.$$
\medskip

However, in preparation for the next situation where it will no longer be so easy to compute $F_A$ and $B_A$, let us make another picture that shows why this happens. This construction only really sees rectangles which contains the origin, and these will continue to play an important role in the rest of this paper: 
	\begin{defin} \label{D:hooked}
	Any dyadic rectangle in $Q$ whose lower left corner is the the origin will be called a \textbf{hooked} rectangle. In our case, any $N$-coarse dyadic hooked rectangle must be of the form 
		\begin{equation}\label{E:hookedDef}
		R = [0,2^{-N+m})\times [0,2^{-N+k}).
		\end{equation}
	\end{defin}
	
	\medskip
	
	It is easy to see that inside any hooked rectangle of the form \eqref{E:hookedDef} we have $(m+1)(k+1)$ dyadic rectangles containing $A$ -- in other words, $(m+1)(k+1)$ hooked rectangles. Let us associate the pair of integers $(m, k)$ with such an $R$.
In the previous example, the family $\cR$ gets the following associated pairs:
$$
(0, N), (1, N-1), \dots, (k, N-k),\dots, (N, 0)\,.
$$

If we plot these integer points on the graph $(x, y)$ they will lie on a straight line $y=N-x$, as in Figure \ref{fig:Bal2}.
The area of the triangle {\it below} that line (and bounded by the axes) is $\approx N^2$. 
It is easy to see that, in this setting, $F_A(\cR)$ \textit{is the same as the number of integer points inside and on the boundary of this triangle}, which is comparable to the \textit{area} of the triangle. So this reasoning would give us exactly that $F_A \asymp N^2$.

Similarly, for each $R_k$, the number of hooked rectangles contained in $R_k$ corresponds now in the picture on the right to the number of integer points inside and on the boundary of $R_k$. So, $B_A(\cR)$ \textit{is proportional to the area of the largest rectangle in $(x, y)$ with one  vertex of the type $(k, N-k)$, another vertex at $(0,0)$, and with sides parallel to the axes}. This is also $\approx N^2$ (just take $k=\frac{N}2$).

Now we know what to do to build the unbalanced system.

\begin{figure}[h!]
  \begin{subfigure}[b]{0.49\textwidth}
    \includegraphics[width=\textwidth]{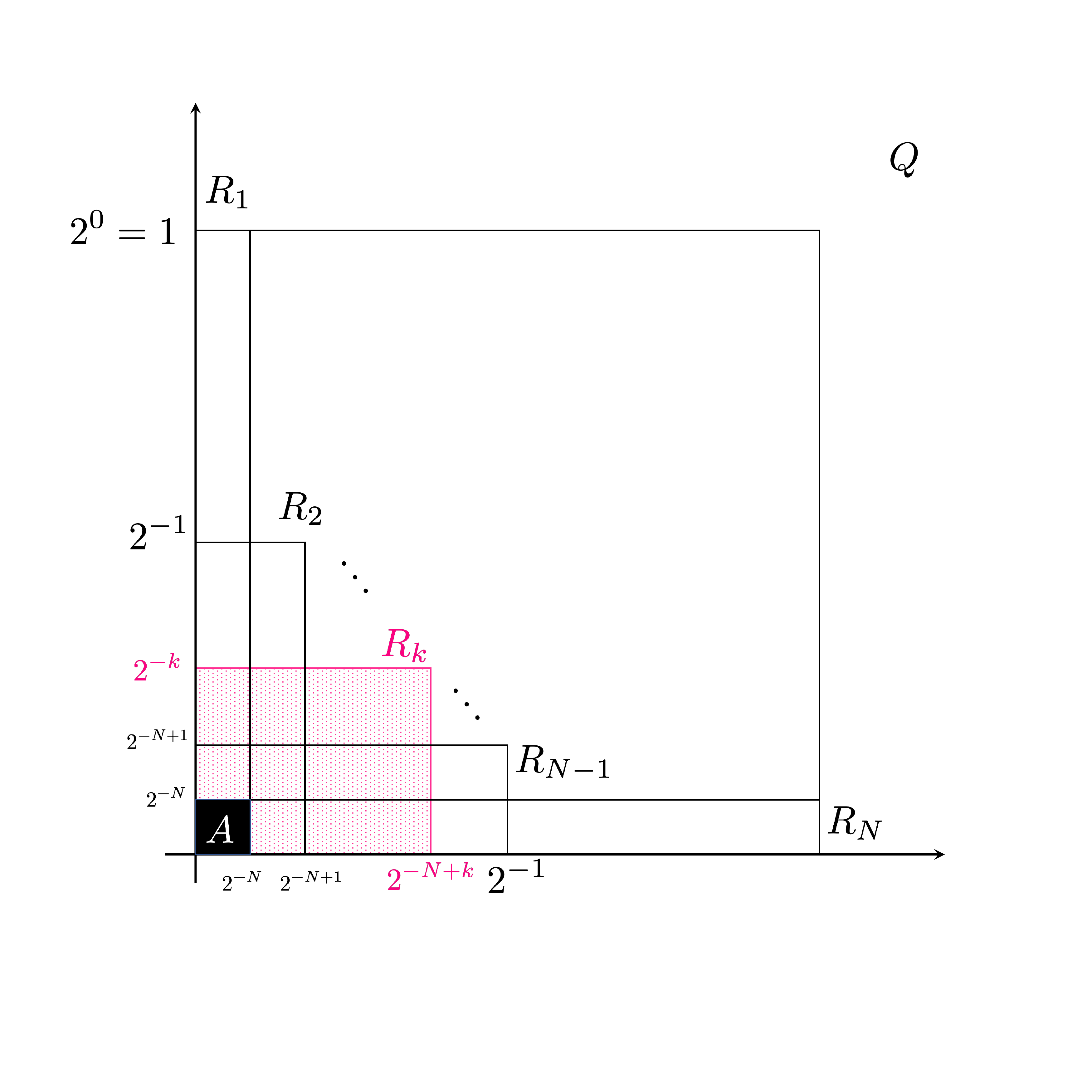}
    \caption{The $A$-balanced collection $\cR$.}
    \label{fig:Bal1}
  \end{subfigure}
  \begin{subfigure}[b]{0.49\textwidth}
    \includegraphics[width=\textwidth]{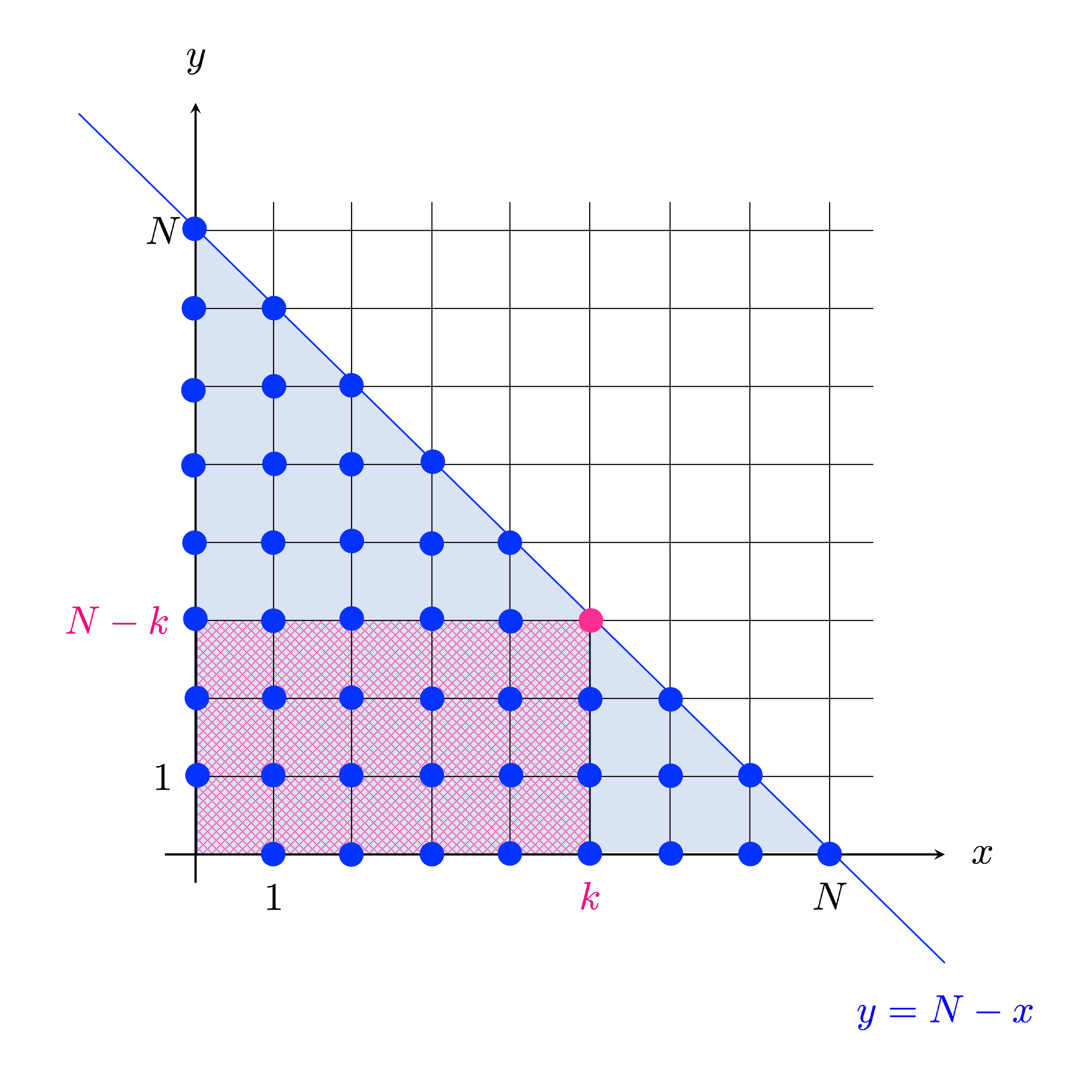}
    \caption{The figure corresponding to $\cR$ in the $(x,y)$ plane.}
    \label{fig:Bal2}
  \end{subfigure}
  \caption{A contrast of $[0,1]$ and $[0,N]$.}
  \label{fig:tess1dim}
\end{figure}

\medskip

\subsubsection{The unbalanced family.}
Let us consider the family $\cR$ of dyadic rectangles  attached to the lower left corner and having the size $2^{-N+m}\times 2^{-N+k}$, as before, but where we do not have $m+k\le N$, but rather have
$$
mk\le N.
$$
In other words, $\cR$ consists of all \textit{hooked rectangles which lie ``under the hyperbola''} $xy=N$. 

We have a similar right picture also, in the $(x, y)$ plane, by associating to each hooked $R$ the integer point $(m, k)$.
But now, instead of the area of the triangle above, we need to compare ``almost the area" under the hyperbola (we explain this term below)
 $$
 y=\frac{N}{x},\quad 1\le x\le N
 $$
and the area of the largest rectangle of the type $[0,m]\times [0, k]$ with $mk\le N$ which lies under this hyperbola. The latter area is of course $N$, so 
	$$B_A\asymp N.$$  
But the area under the hyperbola is of the order
 $$
 \int_1^N \frac{N}{x} dx\asymp N\log N\,.
 $$
The subtle point is that we need a smaller quantity, denoted earlier by the words ``almost the area" under hyperbola. These words stand for the {\it union} $u(N)$ of rectangles of the type $[0,m]\times [0, k]$ with $mk\le N$ lying entirely in the domain below hyperbola.

\bigskip

Consider the measure $\mu_A$ which is just uniform measure on $A$ with total mass $a$. The box condition for $\cR$:
	$$\sum_{R\subset R_0} \mu_A(R)^2 \leq \mu_A(R_0), \:\: \forall R_0 \in \cR$$
becomes in this case
	$$a^2\big(\#\text{ of hooked rectangles inside } R_0\big) \leq a, \forall R_0 \in \cR,$$
which will be satisfied if and only if
	$$a^2 B_A \leq a.$$
So, we may choose
	$$a=B_A^{-1} = N^{-1}.$$
	
Now, the Carleson condition would require of $\Omega = \cup_{R\in\cR}R$ to satisfy
	$$
	\sum_{R\subset \Omega} \mu(R)^2 \leq C\mu(\Omega), \text{ or  simply } a^2 F_A \leq C a.
	$$
As before, to compute $F_A$ we must really just count the hooked rectangles in $\cR$ -- for each such hooked rectangle, there is a corresponding integer point $(m, k)$ in the $(x, y)$ plane. So computing $F_A$ is the same as \textit{counting the number of integer points under or on the hyperbola} $xy=N$, which is the same as computing the area $u(N)$ mentioned earlier. As the Lemma \ref{almarea} below shows, however, we have
	$$F_A \asymp u(N) \geq cN\log N$$
for some universal constant $c$. We obtain then that
$$
c \log N \le \frac{F_A}{B_A} \le C\,,
$$
which gives a contradiction when $N\to\infty$.

 \begin{lemma}
 \label{almarea}
 For every positive integer $N$, let $u(N)$ denote the area  of the  union of rectangles of the type $[0,m]\times [0, k]$ with $m$, $k$ positive integers such that $mk\le N$, that is, lying entirely in the domain below the hyperbola $xy = N$. Then $ u(N) \geq c\int_1^N \frac{N}{x} dx$ with a small absolute positive constant $c$.
  \end{lemma}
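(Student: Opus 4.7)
The plan is to obtain a lower bound on $u(N)$ by selecting a pairwise disjoint sub-collection of horizontal strips that lie inside the union. For each positive integer $k$ with $1 \le k \le N$, set $m(k) := \lfloor N/k \rfloor$. Then $m(k)\cdot k \le N$, so the hooked rectangle $[0, m(k)] \times [0, k]$ is one of the rectangles whose union defines $u(N)$. Consequently, the horizontal strip $[0, m(k)] \times (k-1, k]$ sits inside the union, and as $k$ runs over $1, \ldots, N$ these strips are disjoint in $y$. Adding up their areas gives
\begin{equation*}
u(N) \;\ge\; \sum_{k=1}^{N} m(k) \;=\; \sum_{k=1}^{N}\Bigl\lfloor \frac{N}{k}\Bigr\rfloor.
\end{equation*}

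To convert this into the desired lower bound, I would apply the elementary inequality $\lfloor N/k\rfloor \ge N/k - 1$ together with $\sum_{k=1}^{N} 1/k \ge \log N$ to get
\begin{equation*}
u(N) \;\ge\; N\sum_{k=1}^{N}\frac{1}{k} \;-\; N \;\ge\; N\log N - N.
\end{equation*}
Since $\int_1^N \tfrac{N}{x}\,dx = N\log N$, for $N$ large (say $N \ge 4$) the right-hand side is at least $\tfrac12\int_1^N \tfrac{N}{x}\,dx$, which yields the lemma with $c = 1/2$; the finitely many small values of $N$ are absorbed into the constant.

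There is no serious obstacle here; the only thing worth emphasizing is why the ``obvious'' approach of comparing $u(N)$ directly with the continuous area under $xy = N$ is misleading. That continuous region is not fully covered by the axis-aligned integer-corner rectangles in the family, so a naive area comparison gives no information. Slicing by horizontal strips of unit height and keeping only the rectangle $[0,\lfloor N/k\rfloor]\times[0,k]$ on each strip is exactly what is needed: it is lossy enough to be rigorous, yet tight enough to recover the sharp order $N\log N$ via the harmonic sum.
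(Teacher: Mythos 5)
Your proof is correct, but it takes a genuinely different route from the paper. You lower-bound the union by the disjoint unit-height horizontal strips $[0,\lfloor N/k\rfloor]\times(k-1,k]$, $k=1,\dots,N$, which gives $u(N)\ge\sum_{k=1}^{N}\lfloor N/k\rfloor\ge N\log N-N$, i.e.\ the harmonic-sum bound with asymptotic constant close to $1$ and no case analysis in $N$. The paper instead keeps only the $\log_2 N+1$ ``dyadic'' members $[0,2^i]\times[0,2^{k-i}]$ when $N=2^k$, computes the area of their union exactly ($2^k+k2^{k-1}$, since each new rectangle contributes an extra $2^{k-1}$), and then handles general $N$ by monotonicity $u(N)\ge u(2^k)$ for $2^k<N<2^{k+1}$, ending with $c=\tfrac{1}{4\log 2}$. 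So your decomposition is finer and yields a sharper constant, while the paper's is a shorter exact computation over a sparse sub-family at the cost of a power-of-two reduction step. One small numerical point: your claim that $N\log N-N\ge\tfrac12 N\log N$ already for $N\ge 4$ is off (it requires $\log N\ge 2$, i.e.\ $N\ge 8$), but this is harmless exactly as you say: for the finitely many small $N$ the bound $u(N)\ge\lfloor N/1\rfloor=N\ge\tfrac12 N\log N$ (valid for $N\le 7$) closes the gap, so an absolute constant such as $c=1/2$ indeed works for all $N$.
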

  
  \begin{proof}
  Suppose first that $N = 2^k$ for some integer $k \geq 1$. Then $u(N)$ is certainly larger than the area of the union of rectangles of the type 
  $[0, 2^i] \times [0, 2^{k-i}]$, with $0 \leq i \leq k$. This latter area can be easily computed, and it is $2^k + k 2^{k-1}$. So
  	$$u(N) \geq 2^k + k 2^{k-1} \geq \log_2(N)\frac{N}{2} = \frac{1}{2\log 2} (N \log N), \:\:\:\:\forall N = 2^k.$$
  Now suppose $2^k < N < 2^{k+1}$. Then
  	\begin{eqnarray*}
	u(N) \geq r(2^k) &\geq& 2^k + k 2^{k-1}\\
		&>& \frac{N}{2} + \log_2\left(\frac{N}{2}\right)\frac{N}{4}\\
		&=& \frac{N}{4} + \frac{N}{4}\log_2 N \geq \frac{1}{4\log 2}(N \log N).
	\end{eqnarray*}
	So, we may take $c = \frac{1}{4 \log 2}$.
  \end{proof}
 
\begin{defin}
\label{mH}
 For every positive integer $N$, consider rectangles of the type $[0,m]\times [0, k]$ with $m$, $k$ positive integers such that $mk> N$, that is, having their upper right vertex above the hyperbola $xy = N$. We call the family of corresponding hooked rectangles  $[0, 2^{-N+m}]\times [0, 2^{-N+k}]$  by symbol ${\mathcal H} :={\mathcal H}_N$.
 \end{defin}
 
\medskip

\section{Gathering things together}
\label{together}

Counterexample should consists of measure $\mu>0$ on the unit square such that the box condition is satisfied
\begin{equation}
\label{box1}
\sum_{R\in \cD(R_0), R\notin {\mathcal H}} \mu(R)^2 \le C_0\, \mu(R_0), \quad \forall R_0\in \cD
\end{equation}
where $\cD$ is all dyadic sub-rectangles of the unit  square, $\cD(R_0)$ is all dyadic sub-rectangles of $R_0$.
But Carleson condition is not satisfied, meaning that there is a collection $\cR$ of rectangles
of $\cD$, such that if 
\begin{equation}
\label{UcR1}
U_\cR= \cup_{R\in \cR} R,
\end{equation}
then
\begin{equation}
\label{Carl1}
\sum_{R\subset U_\cR, R\notin {\mathcal H}} \mu(R)^2 \ge C_1 \mu(U_\cR),
\end{equation}
where $C_1$ is as large as we wish with respect to $C_0$.

\medskip

We fix a large $N$, measure will be $2^{-N}$-grained, meaning to have constant (or almost constant) density on each $2^{-N} \times 2^{-N}$ dyadic square. Constant $C_0$ will be
absolute, and $C_1$ will grow as $\log N$.

\medskip

We already constructed the correct family $\cR$ above. And part of the measure $\mu_A$: it was just mass $a=1/N$ on lower left $2^{-N} \times 2^{-N}$ dyadic square. 

We checked \eqref{Carl} and we checked \eqref{box1}--but only for $R_0$ in $\cR$. We need to check this for all dyadic $R_0\in \cD$.

But if $R_0$ is not in the family $\cR$ then only two things may happen: 1) $\mu(R_0)=0$,
2) $R_0\in {\mathcal H}$.

In the first case \eqref{box} is obvious. Let us see what happens in the second case, when $R_0$ is a hooked rectangle, see Definition \eqref{D:hooked}.
Let 
\begin{equation*}
		R = [0,2^{-N+m})\times [0,2^{-N+k}).
		\end{equation*}

	\medskip
	
	It is easy to see that inside any hooked rectangle of this form we have $(m+1)(k+1)$ dyadic rectangles containing $A$ -- in other words, $(m+1)(k+1)$ dyadic rectangles. 
	And all of them will be hooked.
	
	But only $\approx N$ of them will have $mk \le N$, in other words only $\approx N$ of them will be $\notin {\mathcal H}$.
	
	So inequality \eqref{box1} becomes (only hooked rectangles carry non-zero measure)
	$$
	\sum_{R\, hooked,\, R\subset R_0, R\notin H}\mu_A(R)^2 \asymp \frac{1}{N^2}\cdot (m+1)(k+1)\lesssim \frac1{N} = \mu_A(R_0)\,.
	$$

\section{Box condition  implies box capacitary condition}

In \cite{AHMV} it is shown how Carleson condition on bi-tree implies capacitary box condition on bi-tree. Here we show that the box condition alone on bi-tree  implies  capacitary box condition on bi-tree.

\medskip

We deal with dyadic rectangles inside the unit square $Q$.  We call the family of such rectangles with sides at least $2^{-N}$ by symbol $\cD:=\cD_N$.Capacity of rectangle $R= H\times V$ ($H, V$ are dyadic intervals) is
\begin{equation}
\label{R}
cap (R) = \frac{1}{\log_2 \frac1{|H|}\cdot \log_2\frac1{|V|}}\,.
\end{equation}
If we consider dyadic intervals instead of rectangles, then $cap H=\frac{1}{\log_2 \frac1{|H|}}$. This is not the definition of capacity. The definition is using the bi-tree model of dyadic rectangles (tree for dyadic intervals).

Let $\psi$ be a non-negative function on bi-tree such that
$$
\bI\psi(\om) \ge 1\quad \forall \om \subset R\,.
$$
These are ``$R$-admissible" functions. Among admissible $\psi$ choose the one such that
$$
\|\psi\|_{{\ell^2(bi-tree)}}^2 =\sum_{\gamma\in bi-tree} \psi(\gamma)^2
$$ is the smallest. Then this smallest $\|\psi\|_{{\ell^2(bi-tree)}}^2 $ is called the bi-tree capacity.
Absolutely analogous definition is for the simple tree capacity.


\medskip


Now let $R=\om=$ $2^{-N}\times 2^{-N}$ small dyadic square (let it be with the left lower corner at $(0,0)$, but this does not matter at all).
 Clearly $(N+1)^2\asymp N^2$ dyadic rectangles contain this $\om$. So this is how many ancestors on $\om$ we will have in a bi-tree. Now we need to distribute positive numbers on these ancestors to have the sum of these numbers to be equal to $1$, but the sum of square of these numbers to be as small as  possible.

Obvious choice is to give each ancestor $\gamma$ the same $\psi(\gamma)\asymp \frac1{N^2}$.
Then
$$
\|\psi\|_{{\ell^2(bi-tree)}}^2 =\sum_{\gamma\in bi-tree} \psi(\gamma)^2 \asymp \Big(\frac1{N^2}\Big)^2 N^2 = \frac1{N^2}\,.
$$
This proves that $cap \,\om \asymp  \frac1{N^2}$. By the same logic we see \eqref{R}, or, the capacity of dyadic interval  to be $cap H=\frac{1}{\log_2 \frac1{|H|}}$ if we consider the simple tree.

\bigskip

In \cite{AMPS} it is proved that 
\begin{equation}
\label{muR}
\mu(R) \le \tilde C\, cap (R) = \frac{\tilde C}{\log_2 \frac1{|H|}\cdot \log_2\frac1{|V|}}
\end{equation}
is the necessary condition for Carleson embedding on bi-tree (and tree of course).
In particular,
\begin{equation}
\label{muom}
\mu(\om) \le \frac{\tilde C}{N^2}
\end{equation}
is necessary for Carleson embedding. So ``no \eqref{muom}"-- then ``forget about embedding".


\bigskip

In fact, \cite{AMPS} proves that capacitary condition of type \eqref{muR} is necessary and {\it sufficient} for the Carleson embedding if to allow not just dyadic boxes in \eqref{muR} but any unions of dyadic boxes.

We can call \eqref{muR}, \eqref{muom} ``box capacitary conditions".


\bigskip

But now we will show  \eqref{muom} in ``a simple way". To derive it we will not be using the full strength of Carleson embedding, but only box condition as follows:
\begin{equation}
\label{box}
\forall R_0\in \cD, \quad \sum_{R\subset R_0, R\in \cD} \mu(R)^2 \le C_0 \mu(R_0)\,.
\end{equation}
By multiplying $\mu$ by $C_0^{-1}$ we can always think that $C_0=1$.

So let $\om$ is $2^{-N}\times 2^{-N}$ dyadic square,  let it be with the left lower corner at $(0,0)$ (but this does not matter at all). Let 
$Q_{m, k}$ be a hooked rectangle inside $Q$ with upper right corner at $(2^{-N+m}, 2^{-N+k})$.
Only $\{Q_{m, k}\}_{m,k}$, $0\le m\le N, 0\le k\le N$, contain $\om$.

Let 
 $$
4a^2\ge \mu(\om)= \mu([0, 2^{-N}]^2) \ge a^2\,.
 $$

Let us denote $\mu(m, k) = \mu (Q_{m,k})$. If we fix $(m, k)$, then $Q_{m, k}$ contains at least $m\cdot k$ other $Q_{m', k'}$ that contain $\om$. Let us use this fact, and
 apply \eqref{box}  (with $C_0=1$) for the first time:
\begin{equation}
\label{mk}
 \mu(m, k) \ge a^4\, mk\,.
 \end{equation}

Now put $a_0:=a, a_1:= 2a,\dots, a_\ell := 2^\ell a$, and we wish to choose $k_1$ in such a way that
$$
\mu(k_1, k_1) \ge k_1^2 a_0^4 \ge 4a_0^2=a_1^2 \,.
$$

By \eqref{mk} we choose $k_1 =\frac{2}{a_0}=\frac{2}{a}$.

But by a the same reasoning as \eqref{mk} we get
\begin{equation}
\label{k1k2}
\mu(k_1+k_2, k_1+k_2) \ge k_2^2 \,\mu(k_1, k_1) \ge k_2^2\, a_1^4\,.
\end{equation}
We wish to choose $k_2$ in such a way that
$$
\mu(k_1+k_2, k_1+k_2) \ge a_2^2 =4a_1^2\,.
$$
As before we choose $k_2 =\frac{2}{a_1}= \frac{2}{2a_0}=\frac{1}{a}$.
We continue in the same vein. We next use
\begin{equation}
\label{k1k2k3}
\mu(k_1+k_2+k_3, k_1+k_2+k_3) \ge k_3^2\, \,\mu(k_1+k_2, k_1+k_2)= k_3^2\, a_2^4
\end{equation}
And we choose $k_3$ to have
$$
\mu(k_1+k_2+k_3, k_1+k_2+k_3,) \ge 4a_2^2 = a_3^2.
$$
As before we choose $k_3 =\frac{2}{a_2}= \frac{2}{2^2a}=\frac{1}{2a}$.
 
 
 We continue this recursion. Without loss of generality we can think that $a=2^{-s}$, and the recursion holds while $k_j\ge 1$. But
 $$
 k_\ell = \frac{1}{2^{\ell-2} a} = \frac{2^s}{2^{\ell-2}}\,.
 $$
 So let us end recursion  when $\ell = s $. Then
 $$
 \mu(k_1+k_2+k_3+\dots +k_\ell, k_1+k_2+k_3+ \dots +k_\ell) \ge  a_\ell^2 =2^{2\ell} a^2=1\,.
$$
Suppose that
\begin{equation}
\label{N2}
k_1+\dots +k_\ell  = \frac{2}{a}\big(1+\frac12+\frac1{2^2}+\dots+\frac1{2^\ell}\big) \le N/2\,.
\end{equation}
Then we can make one more step. Fix some $k\le N/2$ and write
$$
 \mu(k_1+k_2+k_3+\dots +k_\ell+k, k_1+k_2+k_3+ \dots +k_\ell+k) \ge k^2  \,\, \mu(k_1+\dots +k_\ell, k_1+\dots +k_\ell) \ge k^2\cdot 1\,.
 $$
 This is impossible since $\mu([0,1]^2) \le 1$ and square $[0, 2^{-N + k_1+k_2+k_3+\dots +k_\ell+k}]^2$ is inside $[0,1]^2$.
 Hence the negation of \eqref{N2} holds, so
 $a\le cN^{-1}$. Thus, $\mu([0, 2^{-N}]^2) \le c'N^{-2}$. By the same reasoning one proves that
 $$
 \mu([0, 2^{-m}]\times [0, 2^{-k}]) \le c\, (mk)^{-2}\,.
 $$

\end{document}